\DeclarePairedDelimiter\lr{\lparen}{\rparen}
\newcommand{\bbR}{\mathbb{R}}
\newcommand{\bbC}{\mathbb{C}}
\newcommand{\calC}{\mathcal{C}}
\newcommand{\btab}{\begin{center}\def\arraystretch{1.5}\begin{tabular}}
        \newcommand{\etab}{\end{tabular}\end{center}}
\newcommand{\bbm}{\begin{bmatrix*}}
    \newcommand{\ebm}{\end{bmatrix*}}
\newcommand{\bvm}{\begin{vmatrix*}}
    \newcommand{\evm}{\end{vmatrix*}}
\newcommand{\inv}{^{-1}}
\renewcommand{\t}{^\top}
\newcommand{\set}[2]{\left\{ #1 ~\left|~ \vphantom{#1} #2 \right. \right\}}
\newcommand{\qand}{\quad\text{and}\quad}
\newcommand{\ddt}[1]{\tfrac{\d^{#1}}{\d t^{#1}}}
\newcommand{\pddt}{\lr*{\ddt{}}}
\renewcommand{\d}{\textup{d}}
\newcommand{\bstrut}{\rule[-1.4ex]{0pt}{0pt}}
\newcommand{\cinf}[1]{\calC^\infty_{#1}}
\newcommand{\sys}{{\Sigma}}
\newcommand{\ass}{\text{\upshape A}}
\newcommand{\env}{\text{\upshape E}}
\newcommand{\gar}{{\Gamma}}
\newcommand{\con}{\mathcal{C}}
\newcommand{\contract}[1]{\con_{#1} = (\ass_{#1}, \gar_{#1})}
\newcommand{\simby}{\preccurlyeq}
\newcommand{\meet}{\wedge}
\newcommand{\join}{\vee}
\newcommand{\B}[1]{\mathfrak{B}\lr{#1}}
\newcommand{\Bi}[1]{\mathfrak{B}_{\textup{i}}\lr{#1}}
\newcommand{\Bo}[1]{\mathfrak{B}_{\textup{o}}\lr{#1}}
\renewcommand{\epsilon}{\varepsilon}
\newtheorem{lemma}{Lemma}
\newtheorem{remark}{Remark}
\newtheorem{definition}{Definition}
\newtheorem{theorem}{Theorem}
\title{Behavioural contracts for linear dynamical systems:\\ input assumptions and output guarantees}
\author{B. M. Shali, A. J. van der Schaft, B. Besselink\thanks{The authors are with the Jan C. Willems Center for Systems and Control, and the Bernoulli Institute for Mathematics, Computer Science, and Artificial Intelligence, University of Groningen, Groningen, The Netherlands; Email: {\emph{b.m.shali@rug.nl}}; \emph{a.j.van.der.schaft@rug.nl}; \emph{b.besselink@rug.nl}.}}
\begin{document}
	\maketitle

    \begin{abstract}
        We introduce contracts for linear dynamical systems with inputs and outputs. Contracts are used to express formal specifications on the dynamic behaviour of such systems through two aspects: assumptions and guarantees. The assumptions are a linear system that captures the available knowledge about the dynamic behaviour of the environment in which the system is supposed to operate. The guarantees are a linear system that captures the required dynamic behaviour of the system when interconnected with its environment. In addition to contracts, we also define and characterize notions of contract refinement and contract conjunction. Contract refinement allows one to determine if a contract expresses a stricter specifications than another contract. On the other hand, contract conjunction allows one to combine multiple contracts into a single contract that fuses the specifications they express.
    \end{abstract}

    \vspace{-3mm}

    \section{Introduction}

    Modern engineering systems, such as smart grids and intelligent transportation systems, often comprise a large number of interconnected physical components that are modelled as continuous dynamical systems. Specifications on such components typically belong to one of two categories: dissipativity or set-invariance. Dissipativity theory \cite{willems1972} provides an elegant unifying framework that captures requirements such as stability, passivity or performance, while set-invariance techniques \cite{blanchini1999} are {\color{black} natural candidates for expressing} safety requirements. However, the growing complexity of modern engineering systems necessitates a theory of specifications that goes beyond dissipativity and set-invariance. {\color{black} Namely, these frameworks generally do not permit \emph{dynamic} specifications: the supply rates that are central to dissipativity theory are static, as are typical invariant sets. Moreover, when employed as specifications on components of large-scale interconnected systems, these frameworks do not allow to explicitly take the (dynamics of the) environment of such a component into account, possibly making the specification unnecessarily conservative.}

    Motivated by these issues, we introduce \emph{contracts} as specifications for linear dynamical systems. {\color{black} Contracts were initially developed in the field of software engineering \cite{meyer1992} but were later adapted to a variety of system classes in computer science:} rely-guarantee contracts were introduced in \cite{jones1983} to deal with programs that operate concurrently, while the popularisation of interface theories \cite{chakrabarti2002, dealfaro2005} has led to a boom in the development of contract-based theories for cyber-physical systems \cite{davare2013, vincentelli2012}. Consequently, there is a great diversity of styles and approaches to using contracts as specifications, which nevertheless follow a common philosophy, namely, to support the independent design of components within interconnected systems. This philosophy is captured in the meta-theory of contracts introduced in \cite{benveniste2018}. This meta-theory abstracts away the specific design choices made when developing a contract theory, while still formally defining all relevant concepts.

    Inspired by this meta-theory, we define assume-guarantee contracts for linear dynamical systems with inputs and outputs. As the name suggests, these contracts consist of assumptions and guarantees, both of which are linear systems themselves, and which can be used to express specifications for a system in the following sense. First, the assumptions capture the available knowledge on the dynamic behaviour of the environment in which the system is supposed to operate, thus establishing a class of compatible environments. Second, the guarantees capture the required dynamic behaviour of the system when interconnected with a compatible environment, thus establishing a class of implementations. This is formalized very naturally using the behavioural approach to systems theory \cite{willems1989, polderman1998}.

    In addition to defining contracts, we characterize contract implementation and provide a method for verifying that a given linear system implements a given contract. We also define and characterize the concepts of contract refinement and contract conjunction, again taking inspiration from the meta-theory in \cite{benveniste2018}. The notion of contract refinement allows us to reason when one contract represents a stricter specification than another contract. On the other hand, the notion of contract conjunction allows us to construct a contract that fuses the specifications expressed by several different contracts. As will be shown later in this paper, these two concepts are intimately related and can be characterized in a very intuitive and conceptually simple manner.

    We regard this work as a first step towards a comprehensive contract theory for linear dynamical systems. Nevertheless, we note that {\color{black} ideas from contract theories} have already been used to express specifications on dynamical systems. Assume-guarantee contracts that capture set-invariance properties were introduced in \cite{saoud2018} and used in \cite{saoud2018b} for the design of symbolic controllers. Closely related are also the assume-guarantee contracts for safety introduced in \cite{eqtami2019}. However, in contrast to the contracts in this paper, the contracts presented in \cite{saoud2018b} and \cite{eqtami2019} do not allow specifications involving dynamics, neither in the assumptions nor in the guarantees. This is not the case for another class of contracts, called parametric assume-guarantee contracts \cite{kim2017, khatib2020}, which were introduced to express specifications on input-output gain properties. Nonetheless, the latter are only defined for discrete systems, {\color{black} whereas we consider continuous systems} in this paper. In fact, the contracts in this paper are most closely related to the contracts introduced in \cite{besselink2019}, the main difference being that the external variable there is not assumed to be an input-output pair. Our work is also closely related to the work on compositional analysis and assume-guarantee reasoning for linear systems presented in \cite{kerber2009, kerber2010, kerber2011}, although these do not explicitly define contracts.

    The remainder of this paper is structured as follows. In Section~\ref{sec:system_class}, we discuss the class of systems that will be treated in this paper. There, we also review the concept of external behaviour together with some relevant results. In Section~\ref{sec:contracts}, we define assume-guarantee contracts for the class of systems discussed in Section~\ref{sec:system_class}. Moreover, we define and characterize the notions of contract implementation, contract refinement and contract conjunction. As such, Section~\ref{sec:contracts} contains the main contributions of this paper. These are then demonstrated with an illustrative example in Section~\ref{sec:example}, followed by concluding remarks in Section~\ref{sec:conclusion}.

    The notation in this paper is mostly standard. The space of smooth functions from $\bbR$ to $\bbR^n$ is denoted by $\cinf{n}$. A matrix whose entries are polynomials is called a \emph{polynomial matrix}, and a matrix whose entries are rational functions is called a \emph{rational matrix}. All polynomials are univariate and have real coefficients. A rational function is \emph{proper} if the degree of its denominator is greater than or equal to the degree of its numerator. A rational matrix is proper if all of its entries are proper rational functions. We say that a square polynomial matrix $P(s)$ is \emph{invertible} if there exists a rational matrix $Q(s)$ such that $P(s)Q(s) = I$. We say that $P(s)$ is \emph{unimodular} if there exists a \emph{polynomial} matrix $Q(s)$ such that $P(s)Q(s) = I$. In both cases, we say that $Q(s)$ is the \emph{inverse} of $P(s)$, which we denote by $P(s)\inv$.

    \section{Models of physical systems}\label{sec:system_class}

    Consider the linear system
    \begin{equation}\label{eq:sys_iso}
     \sys:
     \left\lbrace
     \begin{aligned}
         \dot x &= Ax + Bu,\\
         y &= Cx + Du,
     \end{aligned}
     \right.
    \end{equation}
    with state trajectory $x\in\cinf{n}$, input trajectory $u\in\cinf{m}$, output trajectory $y\in\cinf{p}$. The system $\sys$ is regarded as an open system in  which  the  external  variables $u$ and $y$ interact  with  the  environment,  whereas the state $x$ is internal and does not interact with the environment.
    \begin{figure}[!htpb]
    	\centering
    	\begin{tikzpicture}[->,>=stealth',shorten >=1pt,auto,node distance=3cm,
    		semithick]
    		\tikzset{box/.style = {shape = rectangle,
    				color=black,
    				fill=white!96!black,
    				text = black,
    				inner sep = 5pt,
    				minimum width = 35pt,
    				minimum height = 17.5pt,
    				draw}
    		}

    		\node[box] (S2) at (2,0) {$\sys$};

    		\draw (0,0) -- node[midway, above] {$u$} (S2);
    		\draw (S2) -- node[midway, above] {$y$} (4,0);
    	\end{tikzpicture}
    	\caption{The system $\sys$ as a signal processor.}
    	\label{fig:sys}
        \vspace{-3.5mm}
    \end{figure}
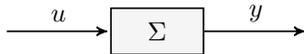
	More precisely, $\sys$ is seen as a signal processor that takes an externally provided input signal $u\in\cinf{m}$ and transforms it to an externally available output signal $y\in\cinf{p}$, as shown by the diagram in Figure~\ref{fig:sys}. Our goal in this paper is to develop a formal method for expressing specifications on the dynamic behaviour of such a system. The approach we take requires a method for system comparison; we need to be able to say when one system behaves ``like'' another system. This is formalized quite naturally using the behavioural approach to systems theory \cite{willems2007a, polderman1998}.  In particular, we define the \emph{external behaviour} $\B{\sys}$ of $\sys$ as the linear subspace
    \begin{equation*}
     	\B{\sys} = \set{(u,y)\in\cinf{m+p}}{\exists x\in\cinf{n} \text{ s.t.\ } \eqref{eq:sys_iso} \text{ holds} }.
    \end{equation*}

    \begin{remark}
        The external behaviour can be used to compare systems in the following sense. Consider two systems $\sys_1$ and $\sys_2$ of the form \eqref{eq:sys_iso}. If $\B{\sys_1}\subset\B{\sys_2}$, then for a given input, the set of outputs generated by $\sys_1$ is contained in the set of outputs generated by $\sys_2$, thus $\sys_2$ can be considered to have ``richer" dynamics than $\sys_1$. Taking this a step further, if $\B{\sys_1} = \B{\sys_2}$, then for a given input, the set of outputs generated by $\sys_1$ is precisely the same as the set of outputs generated by $\sys_2$, hence $\sys_1$ and $\sys_2$ cannot be distinguished on the basis of external behaviour alone. In such a case, we will view $\sys_1$ and $\sys_2$ as different representations of the same external behaviour rather than different systems, i.e., we identify the system with its external behaviour.
    \end{remark}

    There are many different representations of a given external behaviour. As we are interested in providing specifications only on the external behaviour of a system, a representation that does not involve the internal state would be more appropriate for our purposes. With this in mind, consider the linear system
    \begin{equation}\label{eq:sys_io}
    	\sys:
    	\begin{aligned}
	    	P\pddt y = Q\pddt u
    	\end{aligned}
    \end{equation}
    with $u\in\cinf{m}$, $y\in\cinf{p}$ and real polynomial matrices $P(s)$ and $Q(s)$. If $P(s)$ is square and invertible and $P(s)\inv Q(s)$ is a proper rational matrix, then we say that $\sys$ is in \emph{input-output form}  \cite[Section~3.3]{polderman1998}. The system $\sys$ being in input-output form guarantees that $u$ can be chosen freely, i.e., for all $u\in\cinf{m}$, there exists $y\in\cinf{p}$ such that $(u,y)\in\B{\sys}$. In fact, it also guarantees that none of the components of $y$ can be chosen freely, thus ensuring that $u$ and $y$ in \eqref{eq:sys_io} have the roles of input and output, respectively. Similarly to before, we define the external behaviour of $\sys$ of the form \eqref{eq:sys_io} as
    \begin{equation*}
    	\B{\sys} = \set{(u,y)\in\cinf{m+p}}{\eqref{eq:sys_io} \text{ holds} }.
    \end{equation*}
    {\color{black} The external behaviour of a system of the form (1) can be compared to that of a system of the form (2). In fact, we have the following result on representations.}
    \begin{theorem}\cite[Theorem 6.2]{willems1983}\label{thm:representations}
    	Let $\mathfrak{B}\subset\cinf{m+p}$ be a linear subspace. There exists $\sys_1$ of the form \eqref{eq:sys_iso} such that $\B{\sys_1} = \mathfrak{B}$ if and only if there exists $\sys_2$ of the form \eqref{eq:sys_io} in input-output form such that $\B{\sys_2} = \mathfrak{B}$.
    \end{theorem}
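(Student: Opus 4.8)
The plan is to establish the two implications separately, in each case using the transfer matrix as a bridge between the two representations. For the forward implication, suppose $\sys_1$ of the form \eqref{eq:sys_iso} satisfies $\B{\sys_1} = \mathfrak{B}$. I would first regard \eqref{eq:sys_iso} as a latent-variable representation in the internal state $x$ and eliminate $x$: by the elimination theorem for linear differential behaviours \cite{polderman1998}, the projection of $\set{(x,u,y)}{\eqref{eq:sys_iso} \text{ holds}}$ onto $(u,y)$ is again a linear differential behaviour, so $\mathfrak{B} = \ker R\pddt$ for some full-row-rank polynomial matrix $R$. Partitioning $R = [-Q,\ P]$ according to $(u,y)$ yields $P\pddt y = Q\pddt u$, and it remains to verify the two structural constraints. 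First, in \eqref{eq:sys_iso} the input $u$ may be chosen freely while, once $u$ and $x(0)$ are fixed, $y$ is uniquely determined; hence the trajectories $(0,y)\in\mathfrak{B}$ form a finite-dimensional space, so no component of $y$ is free and $u$ is a maximal set of free variables. This is precisely the behavioural characterization of $P$ being square and nonsingular. Second, since the transfer matrix of $\mathfrak{B}$ is an invariant of the behaviour, it equals both $P\inv Q$ and $C(sI-A)\inv B + D$, and the latter is proper; hence $P\inv Q$ is proper. This produces $\sys_2$ in input-output form with $\B{\sys_2} = \mathfrak{B}$.

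For the converse, suppose $\sys_2$ of the form \eqref{eq:sys_io} in input-output form satisfies $\B{\sys_2} = \mathfrak{B}$, and write $G = P\inv Q$, which is proper by assumption. I would construct a realization in two stages. Starting from a minimal state-space realization $(A_0,B_0,C_0,D)$ of $G$, its behaviour reproduces the transfer matrix of $\sys_2$, but in general it only captures the autonomous modes associated with a coprime factorization of $G$. To recover the remaining autonomous modes---those arising from common factors of $P$ and $Q$---I would augment the realization with an uncontrollable but observable block $\dot{x}_2 = A_2 x_2$ contributing an additional term $C_2 x_2$ to the output, choosing $A_2$ so that its characteristic polynomial accounts for the factor of $\det P$ cancelled in the coprime reduction and $(C_2, A_2)$ observable. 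The combined system is of the form \eqref{eq:sys_iso}, retains the transfer matrix $G$, and is designed so that the dimension of its autonomous behaviour agrees with $\deg \det P$.

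The elimination step and the transfer-matrix matching are routine, standard constructions. The main obstacle is the converse direction: proving that the constructed state-space behaviour equals $\mathfrak{B}$ \emph{exactly} as a set of smooth trajectories, rather than merely sharing the transfer matrix. Concretely, one must show that the autonomous part $\set{(0,y)}{(0,y)\in\mathfrak{B}}$, whose dimension equals $\deg \det P$, is reproduced by the realization. This is where the coprimeness bookkeeping and the dimension count relating $\deg \det P$ to the total state dimension must be carried out carefully, and where the observability and uncontrollability structure of the augmented block does the essential work.
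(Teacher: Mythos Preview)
The paper does not prove this theorem; it is stated with a citation to \cite{willems1983} and used as a black box. There is therefore no proof in the paper to compare against, and your proposal goes well beyond what the paper does.

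On the merits of your sketch: the forward direction is sound. Elimination of $x$ gives a kernel representation, the freeness of $u$ together with the finite-dimensionality of $\set{(0,y)}{(0,y)\in\mathfrak{B}}$ forces $P$ square and nonsingular, and the transfer matrix is a behavioural invariant, so properness follows from $C(sI-A)^{-1}B+D$.

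The converse has a real gap in the multivariable case. You propose to augment a minimal realization of $G=P^{-1}Q$ by an uncontrollable, observable block $(A_2,C_2)$ chosen so that the \emph{characteristic polynomial} of $A_2$ matches the factor of $\det P$ lost in a coprime reduction. For scalar $y$ this works, because an autonomous scalar behaviour is determined by a single polynomial. For $p>1$ it does not: the autonomous part $\set{y}{P\pddt y = 0}$ is determined by the invariant factors of $P$ (equivalently its Smith form), not merely by $\det P$. Two observable pairs $(C_2,A_2)$ with the same characteristic polynomial can produce different output behaviours $\set{C_2 e^{A_2 t}x_0}{x_0\in\bbR^{n_2}}$, and conversely the ``cancelled'' part of $P$ need not be describable by a single scalar polynomial. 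What you actually need is a state-space realization of the autonomous behaviour $\ker P\pddt$ itself---for instance via an observable realization of each invariant factor of $P$, stacked appropriately---and then to show that the controllable part of this realization coincides with the minimal realization of $G$ up to state-space isomorphism, so that the two pieces glue to a single system of the form \eqref{eq:sys_iso} with the correct behaviour. Your closing paragraph correctly flags this as the hard step, but the construction you describe is not yet the right one.
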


    In view of Theorem~1, it makes no difference whether we consider systems of the form~\eqref{eq:sys_iso} or systems of the form~\eqref{eq:sys_io} in input-output form to represent external behaviours. However, the latter are better suited for the type of analysis that will be carried out in the following sections. For example, inclusion of external behaviours has a simple algebraic characterization for systems of the form \eqref{eq:sys_io}, as shown  next.

    \begin{theorem}\label{thm:inclusion}
        Let $\mathfrak{B}_j$, $j\in\{1,2\}$, be defined as
        \begin{equation*}
            \mathfrak{B}_j = \set{w\in\cinf{k}}{R_j\pddt w = 0},
        \end{equation*}
        where $R_j(s)$ is a polynomial matrix.  Then $\mathfrak{B}_1 \subset \mathfrak{B}_2$ if and only if there exists a polynomial matrix $M(s)$ such that $R_2(s) = M(s) R_1(s)$.
    \end{theorem}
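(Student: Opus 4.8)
The plan is to prove the two implications separately, dispatching the ``if'' direction immediately and reserving the real effort for the ``only if'' direction. For the first, suppose $R_2(s) = M(s) R_1(s)$ for some polynomial matrix $M(s)$. Then for every $w \in \mathfrak{B}_1$ one has $R_2\pddt w = M\pddt R_1\pddt w = M\pddt 0 = 0$, where the factorization of the operator is justified by the fact that constant-coefficient differential operators compose exactly as their associated polynomial matrices multiply. Hence $w \in \mathfrak{B}_2$, proving $\mathfrak{B}_1 \subset \mathfrak{B}_2$.

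For the converse, I would first reduce to the case that $R_1(s)$ has full row rank. Premultiplying by a suitable unimodular matrix $U_0(s)$, write $U_0 R_1 = \bbm R_1' \\ 0 \ebm$ with $R_1'(s)$ of full row rank, say of size $g \times k$. Since $U_0\pddt$ is invertible and the zero rows impose no constraint, $\mathfrak{B}_1 = \ker R_1'\pddt$; moreover $R_1' = \bbm I & 0 \ebm U_0 R_1$, so any factorization $R_2 = M' R_1'$ descends to a factorization through the original $R_1$. It therefore suffices to produce a polynomial $M'(s)$ with $R_2 = M' R_1'$ under the hypothesis $\ker R_1'\pddt \subset \mathfrak{B}_2$.

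Next I would diagonalize via the Smith form: there exist unimodular $U(s)$, $V(s)$ with $U R_1' V = \bbm \Delta & 0 \ebm$ and $\Delta = \diag(\delta_1,\dots,\delta_g)$, $\delta_i \mid \delta_{i+1}$. Introducing $\tilde w = V\inv\pddt w$ gives a bijection of $\cinf{k}$ (as $V$ is unimodular) that sends $\mathfrak{B}_1$ and $\mathfrak{B}_2$ to $\ker (R_1' V)\pddt$ and $\ker (R_2 V)\pddt$ and hence preserves the inclusion. Since $U$ is unimodular, $\ker (R_1' V)\pddt = \ker \bbm \Delta & 0 \ebm\pddt$, so in the new coordinates the constraint decouples into $\delta_i\pddt \tilde w_i = 0$ for $i \le g$, with $\tilde w_{g+1},\dots,\tilde w_k$ entirely free. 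Splitting $\tilde R_2 := R_2 V$ into its first $g$ and last $k-g$ columns, the freedom of the last $k-g$ components forces those columns of $\tilde R_2$ to vanish, and setting all but one constrained component to zero reduces the remaining requirement to a scalar condition on each entry.

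The analytic core is then the scalar fact that a polynomial $q(s)$ satisfies $q\pddt v = 0$ for every $v$ with $\delta\pddt v = 0$ if and only if $\delta \mid q$; this follows from the explicit description of $\ker \delta\pddt$ by polynomial-exponential functions $t^l e^{\lambda t}$ (with real and imaginary parts taken for complex $\lambda$), which $q\pddt$ annihilates exactly when each root of $\delta$ is a root of $q$ of at least equal multiplicity. Applying this entrywise yields $\tilde R_2 = \tilde M \bbm \Delta & 0 \ebm$ for a polynomial matrix $\tilde M(s)$; substituting $\bbm \Delta & 0 \ebm = U R_1' V$ and cancelling the unimodular $V$ gives $R_2 = \tilde M U R_1'$, from which the desired polynomial multiplier is recovered. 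I expect the main obstacle to be precisely the guarantee that this multiplier is polynomial rather than merely rational: it is the multiplicity bookkeeping in the scalar divisibility statement, unlocked by the diagonalization afforded by the Smith form, that secures polynomiality.
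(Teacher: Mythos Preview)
Your argument is correct. The ``if'' direction is identical to the paper's. For the ``only if'' direction the paper simply invokes \cite[Lemma~2.1]{polderman2000} and stops, whereas you give a self-contained proof: reduce to full row rank, pass to the Smith form $UR_1'V = \bbm \Delta & 0\ebm$, change variables by the unimodular $V$, and then exploit the decoupled structure to reduce to the scalar divisibility fact about $\ker\delta\pddt$. Each step is sound; in particular, your justification that the free components kill the last $k-g$ columns of $R_2V$, and that the polynomial-exponential basis of $\ker\delta_i\pddt$ forces $\delta_i$ to divide every entry of the $i$-th column, is exactly what secures a \emph{polynomial} (not merely rational) multiplier. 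The trade-off is that the paper's proof is a one-line citation while yours is a full derivation; on the other hand, your route makes the mechanism transparent and in fact anticipates the Smith-form machinery the paper deploys immediately afterwards in Lemma~\ref{lem:inclusion} to \emph{test} for the existence of $M(s)$.
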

    \begin{proof}
        Suppose that there exists a polynomial matrix $M(s)$ satisfying $R_2(s) = M(s) R_1(s)$. Let $w\in\mathfrak{B}_1$. Then
        \begin{equation*}
            R_2\pddt w = M\pddt R_1\pddt w = 0,
        \end{equation*}
        hence $w\in\mathfrak{B}_2$ and thus $\mathfrak{B}_1\subset \mathfrak{B}_2$. Conversely, suppose that $\mathfrak{B}_1\subset\mathfrak{B}_2$, i.e., $R_1\pddt w = 0$ implies $R_2\pddt w = 0$. Using \cite[Lemma~2.1]{polderman2000}, it follows that there exists a polynomial matrix $M(s)$ such that $R_2(s) = M(s) R_1(s)$.
    \end{proof}

    As a consequence of Theorem~\ref{thm:inclusion}, given two systems $\sys_1$ and $\sys_2$ of the form \eqref{eq:sys_io}, we have that $\B{\sys_1}\subset\B{\sys_2}$ if and only if there exists a polynomial matrix $M(s)$ such that
    \begin{equation*}
        \bbm P_2(s) & -Q_2(s) \ebm = M(s)\bbm P_1(s) & -Q_1(s) \ebm.
    \end{equation*}
    The following lemma utilizes the Smith canonical form \cite[Section~1.8]{kaczorek2007} to produce a condition for the existence of such a polynomial matrix $M(s)$.
     \begin{lemma}\label{lem:inclusion}
        Let $R_1(s)$ and $R_2(s)$ be polynomial matrices and assume that $R_1(\lambda)$ has full row rank for some $\lambda\in\bbC$. Moreover, let $U_1(s)$ and $V_1(s)$ be unimodular matrices that bring $R_1(s)$ to its Smith canonical form, i.e., $R_1(s) = U_1(s)\bbm D_1(s) & 0 \ebm V_1(s)$, where $D_1(s)$ is an invertible diagonal polynomial matrix. Then there exists a polynomial matrix $M(s)$ such that $R_2(s) = M(s)R_1(s)$ if and only if the following conditions hold:
        \begin{enumerate}
            \item $R_2(s)V_1(s)\inv\bbm 0 \\ I \ebm = 0$;\\[1mm]
            \item $R_2(s)V_1(s)\inv\bbm D_1(s)\inv \\ 0 \ebm$ is a polynomial matrix.\vspace{1mm}
        \end{enumerate}
    \end{lemma}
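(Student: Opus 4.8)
The plan is to strip away the two unimodular factors in the Smith form of $R_1(s)$ and reduce the existence of $M(s)$ to a divisibility condition that can be read off directly from $R_2(s)V_1(s)\inv$. Substituting $R_1(s) = U_1(s)\bbm D_1(s) & 0 \ebm V_1(s)$ into the target identity $R_2(s) = M(s)R_1(s)$ and right-multiplying by the polynomial matrix $V_1(s)\inv$ yields the equivalent identity
\begin{equation*}
    R_2(s)V_1(s)\inv = M(s)U_1(s)\bbm D_1(s) & 0 \ebm.
\end{equation*}
Because $V_1(s)$ is unimodular, this step is reversible, so no information is lost in passing to this form.

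The key observation is that $M(s)$ is polynomial if and only if $N(s) := M(s)U_1(s)$ is polynomial, since $U_1(s)$ and $U_1(s)\inv$ are both polynomial. The problem therefore becomes: find a polynomial matrix $N(s)$ satisfying
\begin{equation*}
    R_2(s)V_1(s)\inv = N(s)\bbm D_1(s) & 0 \ebm = \bbm N(s)D_1(s) & 0 \ebm.
\end{equation*}
Writing $R_1(s)$ as an $r\times k$ matrix (so that the full-row-rank hypothesis forces $r\le k$ and makes $D_1(s)$ a square, invertible diagonal block), I would partition $R_2(s)V_1(s)\inv = \bbm X(s) & Y(s) \ebm$ into its first $r$ and last $k-r$ columns. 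Comparing blocks, the identity above admits a solution $N(s)$ if and only if $Y(s) = 0$ and $X(s) = N(s)D_1(s)$.

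The second block is $Y(s) = R_2(s)V_1(s)\inv\bbm 0 \\ I \ebm$, which vanishes precisely when condition~(1) holds. For the first block, invertibility of $D_1(s)$ means the only candidate is $N(s) = X(s)D_1(s)\inv$, and this is polynomial exactly when
\begin{equation*}
    X(s)D_1(s)\inv = R_2(s)V_1(s)\inv\bbm D_1(s)\inv \\ 0 \ebm
\end{equation*}
is a polynomial matrix, which is condition~(2). Chaining these equivalences in both directions proves the lemma.

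I do not anticipate a genuine obstacle here, as the argument is a chain of reversible manipulations rather than a hard existence result. The only point requiring care is the bookkeeping with the block partition together with the verification that every reduction is a true equivalence: in particular, that left-multiplication by $U_1(s)$ and right-multiplication by $V_1(s)\inv$ preserve polynomiality in \emph{both} directions. This two-sided preservation is exactly the property that makes the Smith canonical form the right tool, and it is also where the full-row-rank assumption earns its place, by guaranteeing that $D_1(s)$ is square and invertible so that the candidate $N(s)$ is uniquely determined.
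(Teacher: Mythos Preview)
Your proof is correct and follows essentially the same approach as the paper: both reduce the identity $R_2(s) = M(s)R_1(s)$ via the Smith form and the substitution $N(s) = M(s)U_1(s)$ to the block conditions $Y(s)=0$ and $X(s)D_1(s)^{-1}$ polynomial. The only difference is presentational---you run the argument as a single chain of equivalences, whereas the paper treats the two implications separately and writes out the explicit formula $M(s) = R_2(s)V_1(s)^{-1}\bbm D_1(s)^{-1} \\ 0 \ebm U_1(s)^{-1}$ for the converse.
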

    \begin{proof}
        Suppose that there exists a polynomial matrix $M(s)$ that satisfies $R_2(s) = M(s)R_1(s)$. Then we obtain
        \begin{equation*}
            R_2(s)V_1(s)\inv = \bbm M(s)U_1(s)D_1(s) & 0 \ebm,
        \end{equation*}
        hence conditions 1 and 2 hold, the latter because $M(s)U_1(s)$ is a polynomial matrix. For the converse, suppose that conditions 1 and 2 hold. Condition 2 tells us that
        \begin{equation*}
            M(s) = R_2(s)V_1(s)\inv\bbm D_1(s)\inv \\ 0 \ebm U_1(s)\inv
        \end{equation*}
        is a polynomial matrix. In view of condition 1, we obtain
        \begin{align*}
                M(s)R_1(s) &= R_2(s) V_1(s) \bbm I & 0 \\ 0 & 0 \ebm V_1(s)\inv\\
                &= R_2(s) V_1(s) \bbm I & 0 \\ 0 & I \ebm V_1(s)\inv = R_2(s),
        \end{align*}
        which concludes the proof.
    \end{proof}

    Note that if $\sys_1$ of the form \eqref{eq:sys_io} is in input-output form, then $[ P_1(\lambda) \quad Q_1(\lambda) ]$ has full row rank for some $\lambda\in\bbC$ because $P_1(s)$ is invertible, hence the assumption on $R_1(s)$ in Lemma~\ref{lem:inclusion} is satisfied. More generally, given a behaviour
    \begin{equation*}
        \mathfrak{B} = \set{w\in\cinf{k}}{R\pddt w = 0},
    \end{equation*}
    we can always find a polynomial matrix $R'(s)$ such that
    \begin{equation*}
        \mathfrak{B} = \set{w\in\cinf{k}}{R'\pddt w = 0}
    \end{equation*}
    and $R'(\lambda)$ has full row rank for some $\lambda\in\bbC$, hence the assumption on $R_1(s)$ in Lemma~\ref{lem:inclusion} is not restrictive at all.

    We conclude this section with the following remark.
    \begin{remark}\label{rem:latent}
        Although we have chosen to represent external behaviours without involving a state variable, it is often convenient to represent a given external behaviour with the help of so-called \emph{latent variables}. These are variables that are included in the representation of the external behaviour but are not necessarily of interest to us. Therefore, most generally, we will consider systems of the form
        \begin{equation}\label{eq:sys_latent}
            \sys: P\pddt y = Q\pddt u + E\pddt l,
        \end{equation}
        where $l\in\cinf{r}$ is a latent variable and $E(s)$ is a polynomial matrix. Note that $\sys$ of the form \eqref{eq:sys_iso} is actually a latent variable representation of its external behaviour $\B{\sys}$ with the state $x$ as the latent variable. We already know that for all $\sys_1$ of the form \eqref{eq:sys_iso} there exists $\sys_2$ of the form \eqref{eq:sys_io} in input-output form such that $\B{\sys_1} = \B{\sys_2}$. More generally, the latent variable in $\sys_3$ of the form \eqref{eq:sys_latent} can always be eliminated to obtain $\sys_2$ of the form \eqref{eq:sys_io} such that $\B{\sys_3} = \B{\sys_2}$, as follows from \cite[Theorem~6.2.6]{polderman1998}.
    \end{remark}

    \section{Contracts}\label{sec:contracts}
    Consider a system $\sys$ of the form \eqref{eq:sys_io} in input-output form. We want to express specifications on the external behaviour of such a system. To this end, as an open system, we can assume that $\sys$ operates in interconnection with its environment. Having knowledge about this environment can ease the design burden of $\sys$, hence it should be taken into account when expressing specifications. With this in mind, we will assume that the environment of $\sys$ is another system that generates inputs for it. More precisely, an \emph{environment} $\env$ is a system of the form
    \begin{equation}\label{eq:env}
    	\env:  0 = E\pddt u,
    \end{equation}
    where $u\in\cinf{m}$ and $E(s)$ is a real polynomial matrix. The environment $\env$ defines the \emph{input behaviour}
    \begin{equation*}
    	\Bi{\env} = \set{u\in\cinf{m}}{\eqref{eq:env} \text{ holds}}.
    \end{equation*}
	The interconnection of $\sys$ with the environment $\env$ is obtained by setting the input generated by $\env$ as input of $\sys$. This results in the interconnection
    \begin{equation}\label{eq:env_sys}
    	\env\meet\sys: \bbm P\pddt \\[1mm] 0 \ebm y = \bbm Q\pddt \\[1mm] E\pddt \ebm u,
    \end{equation}
    which is represented graphically in Figure~\ref{fig:interconnection}.
    \begin{figure}[!htpb]
        \centering
        \begin{tikzpicture}[->,>=stealth',shorten >=1pt,auto,node distance=3cm,
            semithick]
            \tikzset{box/.style = {shape = rectangle,
                    color=black,
                    fill=white!96!black,
                    text = black,
                    inner sep = 5pt,
                    minimum width = 35pt,
                    minimum height = 17.5pt,
                    draw}
            }

            \node[box] (S1) at (0,0) {$\env$};
            \node[box] (S2) at (2.5,0) {$\sys$};

            \draw (S1) -- node[midway, above] {$u$} (S2);
            \draw (S2) -- node[midway, above] {$y$} (4.5,0);

        \end{tikzpicture}
        \caption{The interconnection $\env\meet\sys$.}
        \label{fig:interconnection}
        \vspace{-3mm}
    \end{figure}

	As a means of expressing specifications on the external behaviour of $\sys$, we are  interested  in  guaranteeing  properties of $\sys$ when interconnected  with  relevant  environments $\env$. In particular, if we define the \emph{output behaviour}
    \begin{equation*}
    	\Bo{\env\meet\sys} = \set{y\in\cinf{p}}{\eqref{eq:env_sys} \text{ holds}},
    \end{equation*}
    then we want to guarantee properties of $\Bo{\env\meet\sys}$ for all relevant environments $\env$. We will make this explicit by introducing two systems: \emph{assumptions} $\ass$ and \emph{guarantees} $\gar$. Assumptions $\ass$ are a system of the form
    \begin{equation}\label{eq:ass}
    	\ass: 0 = A\pddt u,
    \end{equation}
    and, like environments, they represent the input behaviour
    \begin{equation*}
        \Bi{\ass} = \set{u\in\cinf{m}}{\eqref{eq:ass} \text{ holds}}.
    \end{equation*}
    On the other hand, guarantees $\gar$ are a system of the form
    \begin{equation}\label{eq:gar}
    	\gar: G\pddt y = 0,
    \end{equation}
    and they represent the output behaviour
    \begin{equation*}
    	\Bo{\gar} = \set{y\in\cinf{p}}{\eqref{eq:gar} \text{ holds}}.
    \end{equation*}
    \begin{remark}
    	Although we define environments, assumptions and guarantees as systems involving a single variable, we might represent the behaviours they define with the help of latent variables, as in the general form \eqref{eq:sys_latent}. {\color{black} Since latent variables can always be eliminated (recall Remark~\ref{rem:latent}), the representations \eqref{eq:env}, \eqref{eq:ass} and \eqref{eq:gar} do not pose a restriction.}
    \end{remark}

    With assumptions and guarantees defined, we are ready to introduce the definition of a contract.
    \begin{definition}\label{def:contract}
    	A \emph{contract} $\con$ is a pair $(\ass, \gar)$ of assumptions $\ass$ and guarantees $\gar$.
    \end{definition}

    Contracts can be used to express formal specifications for the external behaviour of systems $\sys$ of the form \eqref{eq:sys_io} in input-output form, as captured in the following definition.

    \begin{definition}
    	An environment $\env$ is \emph{compatible} with the contract $\contract{}$ if $\Bi{\env}\subset\Bi{\ass}$. A system $\sys$ of the form \eqref{eq:sys_io} in input-output form \emph{implements} $\con$ if $\Bo{\env\meet\sys}\subset\Bo{\gar}$ for all environments $\env$ compatible with $\con$. In such a case, we say that $\sys$ is an \emph{implementation} of $\con$.
    \end{definition}

	In other words, an environment is compatible with a contract if the inputs that it generates can be generated by the assumptions, and a system implements a contract if the outputs that it generates when interconnected with any compatible environment can be generated by the guarantees. A contract thus  gives  a  formal  specification  for  the external behaviour of a system through two aspects. First, it specifies (by the assumptions) the class of environments in which the system is supposed to operate. Second, it characterizes  the  required external behaviour  of the system through  the  guarantees,  which  the  system  needs  to  satisfy for  any compatible environment. {\color{black} We emphasize that both the assumptions and guarantees are dynamical systems, hence the specification that a contract expresses is also dynamic.}

	Although contract implementation is defined using the class of compatible environments, verifying whether a system $\sys$ of the form \eqref{eq:sys_io} in input-output form is an implementation can be done directly via the assumptions and guarantees, i.e., without explicitly constructing the class of compatible environments. This is stated in the following theorem, which is represented graphically in Figure~\ref{fig:implementation}.
	\begin{theorem}\label{thm:implementation}
		A system $\sys$ of the form \eqref{eq:sys_io} is an implementation of $\contract{}$ if and only if $\Bo{\ass\meet\sys} \subset \Bo{\gar}$.
	\end{theorem}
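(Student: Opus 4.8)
The plan is to observe that the assumptions $\ass$ are themselves an environment, and in fact the maximal compatible one. Comparing the forms \eqref{eq:env} and \eqref{eq:ass}, the system $\ass: 0 = A\pddt u$ is syntactically an environment, and it is trivially compatible with $\con$ because $\Bi{\ass} \subset \Bi{\ass}$. The interconnection $\ass\meet\sys$ is then just \eqref{eq:env_sys} with $E$ replaced by $A$, so that $\Bo{\ass\meet\sys}$ is well defined. With this in hand, one implication is immediate and the other reduces to a monotonicity argument.

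For necessity, I would assume that $\sys$ implements $\con$ and specialize the defining condition to the compatible environment $\env = \ass$. Since $\Bo{\env\meet\sys}\subset\Bo{\gar}$ holds for every compatible $\env$, taking $\env = \ass$ yields $\Bo{\ass\meet\sys}\subset\Bo{\gar}$ at once. For sufficiency, I would assume $\Bo{\ass\meet\sys}\subset\Bo{\gar}$ and show that every compatible environment produces an output behaviour contained in $\Bo{\ass\meet\sys}$. Fix $\env$ with $\Bi{\env}\subset\Bi{\ass}$ and take $y\in\Bo{\env\meet\sys}$. Unwinding \eqref{eq:env_sys}, there is $u\in\cinf{m}$ with $P\pddt y = Q\pddt u$ and $E\pddt u = 0$; the latter says $u\in\Bi{\env}\subset\Bi{\ass}$, hence $A\pddt u = 0$. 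The very same $u$ then witnesses $y\in\Bo{\ass\meet\sys}$, so $\Bo{\env\meet\sys}\subset\Bo{\ass\meet\sys}\subset\Bo{\gar}$. As $\env$ was arbitrary among compatible environments, $\sys$ implements $\con$.

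I do not expect a genuine technical obstacle here: the entire content sits in the two structural facts that $\ass$ is the largest compatible environment and that restricting the input behaviour can only shrink the induced output behaviour. The only point needing care is the quantifier bookkeeping in the definition of $\Bo{\cdot}$, namely producing the existential witness $u$ from the interconnection with $\env$ and reusing that same $u$ for the interconnection with $\ass$; this reuse is precisely what makes the inclusion $\Bo{\env\meet\sys}\subset\Bo{\ass\meet\sys}$ valid. Note that the input-output form of $\sys$ plays no role in the argument.
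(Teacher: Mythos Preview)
Your proposal is correct and follows essentially the same approach as the paper's proof: both directions hinge on recognizing that $\ass$ is itself a compatible environment (giving necessity immediately) and that $\Bi{\env}\subset\Bi{\ass}$ implies $\Bo{\env\meet\sys}\subset\Bo{\ass\meet\sys}$ via the shared witness $u$ (giving sufficiency). Your observation that the input-output form of $\sys$ plays no role in the argument is also accurate.
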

	\begin{proof}
		Suppose that $\Bo{\ass\meet\sys}\subset\Bo{\gar}$. Note that $y\in\Bo{\ass\meet\sys}$ if and only if there exists $u\in\Bi{\ass}$ such that $(u,y)\in\B{\sys}$. With this in mind, let  $\env$ be an environment compatible with $\con$, and let $y\in\Bo{\env\meet\sys}$. Then there exists $u\in\Bi{\env}\subset\Bi{\ass}$ such that $(u,y)\in\B{\sys}$, hence $y\in\Bo{\ass\meet\sys}\subset\Bo{\gar}$. This shows that $\Bo{\env\meet\sys}\subset\Bo{\gar}$ for all compatible environments $\env$ and thus $\sys$ is an implementation of $\con$.	Conversely, suppose that $\sys$ is an implementation of $\calC$. Since $\ass$ is a compatible environment of $\con$, it follows that $\Bo{\ass\meet\sys}\subset\Bo{\gar}$.
	\end{proof}
    \begin{figure}
        \centering
        \begin{tikzpicture}[->,>=stealth',shorten >=1pt,auto,node distance=3cm,
            semithick]
            \tikzset{box/.style = {shape = rectangle,
                    color=black,
                    fill=white!96!black,
                    text = black,
                    inner sep = 5pt,
                    minimum width = 35pt,
                    minimum height = 17.5pt,
                    draw}
            }
            \draw[-,fill=white!96!black] (0,0.75) -- (4,0.75) -- (4,-.825) -- (0,-.825) -- (0,0.75);

            \node[box, fill=white] (S1) at (1,0) {$\ass$};
            \node[box, fill=white] (S2) at (3,0) {$\sys$};
            \node at (2,-0.625) {$\ass\meet\sys$};

            \draw[->,>=stealth',shorten >=1pt,auto,node distance=3cm,
            semithick, dashed] (S1) -- node[midway, above] {$u$} (S2);
            \draw[-,dashed] (S2) -- (4,0);
            \draw (4,0) -- node[midway, above] {$y$} (4.75,0);

            \node at (5.125,0) {$\subset$};
            \node[box] (G) at (6.25,0) {$\gar$};
            \draw (G) -- node[midway, above] {$y$} (7.625,0);
        \end{tikzpicture}
        \caption{Implementation of $\contract{}$.}
        \label{fig:implementation}
        \vspace{-7mm}
    \end{figure}

    \begin{remark}\label{rem:implementation}
        Theorem~\ref{thm:implementation} allows us to verify that $\sys$ of the form \eqref{eq:sys_io} is an implementation of $\contract{}$ provided that we can verify that $\Bo{\ass\meet\sys}\subset\Bo{\gar}$. The latter can be done in two steps. First, in view of \cite[Theorem~6.2.6]{polderman1998}, we can eliminate the variable $u$ from $\ass\meet\sys$ to obtain
        \begin{equation}\label{eq:ass_meet_sys_R}
            \Bo{\ass\meet\sys} = \set{y\in\cinf{p}}{R\pddt y = 0}
        \end{equation}
        for some polynomial matrix $R(s)$, where $R(\lambda)$ has full row rank for some $\lambda\in \bbC$. Second, from Theorem~\ref{thm:inclusion} we know that  $\Bo{\ass\meet\env}\subset\Bo{\gar}$ if and only if there exists a polynomial matrix $M(s)$ such that $G(s) = M(s)R(s)$, where the latter can be verified using Lemma~\ref{lem:inclusion}. Note that due to Theorem~\ref{thm:representations}, Theorem~\ref{thm:implementation} is also valid for systems $\sys$ of the form \eqref{eq:sys_iso}. For the latter, we need to eliminate $x$ as well as $u$ in order to obtain a polynomial matrix $R(s)$ such that \eqref{eq:ass_meet_sys_R} holds.
    \end{remark}

    A distinguishing feature of using contracts for specifications is that contracts themselves can be compared through a notion of refinement.
    \begin{definition}
    	The contract $\con_1$ \emph{refines} the contract $\con_2$, denoted by $\con_1\simby\con_2$, if all compatible environments of $\con_2$ are compatible environments of $\con_1$ and all implementations of $\con_1$ are implementations of $\con_2$.
    \end{definition}

    We have that $\con_1$ refines $\con_2$ if it specifies stricter guarantees that have to be satisfied for a larger class of environments. In such a case, it is clear that $\con_1$ expresses a more restrictive specification than $\con_2$. Just like contract implementation, contract refinement can be verified on the basis of assumptions and guarantees alone, i.e., without explicitly constructing the classes of compatible environments and implementations of the two contracts. This is the content of the following theorem, whose proof can be found in the appendix.
    \begin{theorem}\label{thm:refinement}
    	The contract $\contract{1}$ refines the contract $\contract{2}$ if and only if $\Bi{\ass_2}\subset\Bi{\ass_1}$ and $\Bo{\gar_1}\subset\Bo{\gar_2}$.
    \end{theorem}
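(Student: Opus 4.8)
The plan is to split the definition of refinement into its two clauses and match each to one of the claimed inclusions. Recall that $\con_1\simby\con_2$ means (i) every environment compatible with $\con_2$ is compatible with $\con_1$, and (ii) every implementation of $\con_1$ is an implementation of $\con_2$. First I would show that (i) is equivalent to $\Bi{\ass_2}\subset\Bi{\ass_1}$. The direction ``$\Leftarrow$'' is immediate, since $\Bi{\env}\subset\Bi{\ass_2}\subset\Bi{\ass_1}$ makes any $\env$ compatible with $\con_1$. For ``$\Rightarrow$'', the key observation is that $\ass_2$ is itself an environment (the forms \eqref{eq:env} and \eqref{eq:ass} coincide) and is trivially compatible with $\con_2$ because $\Bi{\ass_2}\subset\Bi{\ass_2}$; by (i) it is then compatible with $\con_1$, which is exactly $\Bi{\ass_2}\subset\Bi{\ass_1}$.

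Next I would rewrite (ii) using Theorem~\ref{thm:implementation}, which states that $\sys$ implements $\con_j$ if and only if $\Bo{\ass_j\meet\sys}\subset\Bo{\gar_j}$. Assuming both claimed inclusions, the ``$\Leftarrow$'' part of (ii) follows from monotonicity: since $y\in\Bo{\ass_j\meet\sys}$ exactly when some $u\in\Bi{\ass_j}$ has $(u,y)\in\B{\sys}$, the inclusion $\Bi{\ass_2}\subset\Bi{\ass_1}$ gives $\Bo{\ass_2\meet\sys}\subset\Bo{\ass_1\meet\sys}$ for every $\sys$. Hence any implementation $\sys$ of $\con_1$ satisfies $\Bo{\ass_2\meet\sys}\subset\Bo{\ass_1\meet\sys}\subset\Bo{\gar_1}\subset\Bo{\gar_2}$, so it implements $\con_2$.

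The hard part will be the converse: deducing $\Bo{\gar_1}\subset\Bo{\gar_2}$ from (ii), having already established $\Bi{\ass_2}\subset\Bi{\ass_1}$. My plan is to show that every polynomial-exponential trajectory $y^*\in\Bo{\gar_1}$ lies in $\Bo{\gar_2}$ and then invoke the standard fact that a kernel behaviour is determined by its polynomial-exponential trajectories to conclude $\Bo{\gar_1}\subset\Bo{\gar_2}$. To force a given such $y^*$ into $\Bo{\gar_2}$, I would build a specific implementation $\sys^*$ of $\con_1$ with $y^*\in\Bo{\ass_2\meet\sys^*}$; then Theorem~\ref{thm:implementation} applied to (ii) yields $\Bo{\ass_2\meet\sys^*}\subset\Bo{\gar_2}$, hence $y^*\in\Bo{\gar_2}$.

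The construction I have in mind ignores the input entirely: take $\sys^*$ of the form $P^*\pddt y = 0$ (that is, $Q^*=0$) with $P^*(s)$ square and nonsingular, so that $\sys^*$ is automatically in input-output form and $\B{\sys^*}=\cinf{m}\times\ker P^*\pddt$, giving $\Bo{\ass_1\meet\sys^*}=\Bo{\ass_2\meet\sys^*}=\ker P^*\pddt$ (using $0\in\Bi{\ass_2}\subset\Bi{\ass_1}$). It then suffices to choose $P^*$ with $y^*\in\ker P^*\pddt\subset\Bo{\gar_1}$, since the inclusion makes $\sys^*$ implement $\con_1$ while membership places $y^*$ in $\Bo{\ass_2\meet\sys^*}$. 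The main obstacle is exhibiting such a $P^*$, and this is the one genuinely technical point. Here I would use that $\Bo{\gar_1}=\set{y\in\cinf{p}}{G_1\pddt y=0}$ is invariant under differentiation, so the smallest differentiation-invariant subspace $\mathcal{L}\subset\Bo{\gar_1}$ containing the polynomial-exponential $y^*$ is finite-dimensional; being a finite-dimensional differentiation-invariant subspace, $\mathcal{L}$ is an autonomous behaviour and therefore admits a representation $\mathcal{L}=\ker P^*\pddt$ with $P^*(s)$ square and nonsingular, which is exactly the matrix required above.
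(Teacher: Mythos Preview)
Your argument is correct and shares the paper's core idea: both proofs observe that an input-ignoring system $\sys$ with $Q=0$ and $P$ square nonsingular is in input-output form and satisfies $\Bo{\ass_j\meet\sys}=\ker P\pddt$, so that autonomous behaviours serve as test implementations. The easy direction and the derivation of $\Bi{\ass_2}\subset\Bi{\ass_1}$ are identical to the paper's.

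The genuine difference is in how you pass from ``every autonomous sub-behaviour of $\Bo{\gar_1}$ lies in $\Bo{\gar_2}$'' to $\Bo{\gar_1}\subset\Bo{\gar_2}$. The paper isolates this as a separate lemma and proves it algebraically: it takes the Smith form $G_1=U\begin{bmatrix}G_{11}&0\\0&0\end{bmatrix}V$, builds the single family of autonomous systems $\sys_k$ given by $\begin{bmatrix}G_{11}&0\\0&s^kI\end{bmatrix}V\pddt y=0$, and uses a degree argument in $k$ to force the factorisation $G_2=M G_1$ via Theorem~\ref{thm:inclusion}. Your route is trajectory-based: for each polynomial-exponential $y^*\in\Bo{\gar_1}$ you exhibit a finite-dimensional differentiation-invariant subspace $\mathcal L\ni y^*$ inside $\Bo{\gar_1}$, realise it as $\ker P^*\pddt$, and then appeal to the standard fact that kernel behaviours over $\cinf{}$ are determined by their polynomial-exponential trajectories. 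Your approach is conceptually clean and avoids the Smith-form bookkeeping, but it outsources the work to that ``standard fact''---which, if one unwinds its proof, involves essentially the same Smith-form and degree considerations the paper carries out explicitly. The paper's version is thus more self-contained within the tools already set up (Theorem~\ref{thm:inclusion} and Lemma~\ref{lem:inclusion}), whereas yours would benefit from a precise citation for the determination-by-exponentials result.
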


    If a contract $\con$ refines both contracts $\con_1$ and $\con_2$, then $\con$ captures both the specifications that $\con_1$ expresses and the specifications that $\con_2$ expresses. This suggests that multiple contracts can be combined into a single contract that represents a fusion of their specifications. This motivates the following definition.
   	\begin{definition}
   		The \emph{conjunction} of contracts $\con_1$ and $\con_2$, denoted by $\con_1\meet\con_2$, is the largest (with respect to contract refinement) contract that refines both $\con_1$ and $\con_2$.
   	\end{definition}

   	Note that the largest contract that refines both $\con_1$ and $\con_2$ corresponds to the least restrictive contract that fuses the specifications of $\con_1$ and $\con_2$. This suggests that the guarantees of $\con_1\meet\con_2$ should be the guarantees common to $\con_1$ and $\con_2$ and nothing more, and the assumptions of $\con_1\meet\con_2$ should include the assumptions of both $\con_1$ and $\con_2$ and nothing less. The following lemma captures some of this intuition.

	\begin{lemma}\label{lem:conjunction}
		A contract $\contract{}$ refines both contracts $\contract{1}$ and $\contract{2}$ if and only if $\Bi{\ass_1}+\Bi{\ass_2}\subset \Bi{\ass}$ and $\Bo{\gar} \subset \Bo{\gar_1}\cap\Bo{\gar_2}$.
	\end{lemma}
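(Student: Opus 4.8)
The plan is to reduce the statement to Theorem~\ref{thm:refinement} and then translate the resulting four inclusions into the two conditions claimed. First I would apply Theorem~\ref{thm:refinement} separately to each of the refinements $\con\simby\con_1$ and $\con\simby\con_2$. This gives that $\con$ refines $\con_1$ precisely when $\Bi{\ass_1}\subset\Bi{\ass}$ and $\Bo{\gar}\subset\Bo{\gar_1}$, and that $\con$ refines $\con_2$ precisely when $\Bi{\ass_2}\subset\Bi{\ass}$ and $\Bo{\gar}\subset\Bo{\gar_2}$. Hence $\con$ refines both contracts if and only if all four of these inclusions hold simultaneously.

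Next I would combine the four inclusions pairwise. The two guarantee conditions $\Bo{\gar}\subset\Bo{\gar_1}$ and $\Bo{\gar}\subset\Bo{\gar_2}$ are together equivalent to $\Bo{\gar}\subset\Bo{\gar_1}\cap\Bo{\gar_2}$, which is just the elementary fact that a set is contained in two sets exactly when it is contained in their intersection. For the assumption side, I would use that $\Bi{\ass}$ is a linear subspace of $\cinf{m}$, being the solution set of the homogeneous equation $A\pddt u = 0$. Consequently $\Bi{\ass_1}\subset\Bi{\ass}$ and $\Bi{\ass_2}\subset\Bi{\ass}$ hold simultaneously if and only if $\Bi{\ass_1}+\Bi{\ass_2}\subset\Bi{\ass}$, since the sum $\Bi{\ass_1}+\Bi{\ass_2}$ is the smallest subspace containing both $\Bi{\ass_1}$ and $\Bi{\ass_2}$.

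Putting these two equivalences together yields that all four inclusions hold if and only if $\Bi{\ass_1}+\Bi{\ass_2}\subset\Bi{\ass}$ and $\Bo{\gar}\subset\Bo{\gar_1}\cap\Bo{\gar_2}$, which is exactly the claim.

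There is no substantial obstacle here: the entire content sits in Theorem~\ref{thm:refinement}, and the remaining steps are the standard characterizations of sums and intersections. The only point requiring a moment's care is the ``if'' direction of the assumption equivalence, namely inferring $\Bi{\ass_1}\subset\Bi{\ass}$ and $\Bi{\ass_2}\subset\Bi{\ass}$ from $\Bi{\ass_1}+\Bi{\ass_2}\subset\Bi{\ass}$; this follows because each summand is contained in the sum, as $0\in\Bi{\ass_j}$ for $j\in\{1,2\}$.
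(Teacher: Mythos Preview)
Your proposal is correct and follows essentially the same approach as the paper: reduce to Theorem~\ref{thm:refinement} to obtain the four inclusions $\Bi{\ass_i}\subset\Bi{\ass}$ and $\Bo{\gar}\subset\Bo{\gar_i}$ for $i\in\{1,2\}$, then use the linearity of behaviours to collapse these into the sum and intersection conditions. The paper's proof is more terse but the logical structure is identical.
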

	\begin{proof}
		Due to Theorem~\ref{thm:refinement}, the contract $\con$ refines both contracts $\con_1$ and $\con_2$ if and only if $\Bi{\ass_i}\subset\Bi{\ass}$ and $\Bo{\gar}\subset\Bo{\gar_i}$ for all $i\in\{1,2\}$. Since behaviours are linear, this is the case if and only if $\Bi{\ass_1}+\Bi{\ass_2}\subset \Bi{\ass}$ and $\Bo{\gar} \subset \Bo{\gar_1}\cap\Bo{\gar_2}$.
	\end{proof}

    In view of Theorem~\ref{thm:refinement}, Lemma~\ref{lem:conjunction} tells us that if there exist assumptions $\ass$ with $\Bi{\ass} = \Bi{\ass_1}+\Bi{\ass_2}$ and guarantees $\gar$ with $\Bo{\gar} = \Bo{\gar_1}\cap\Bo{\gar_2}$, then $(\ass,\gar)$ is the largest contract that refines both $\contract{1}$ and $\contract{2}$, hence $\con_1\meet\con_2 = (\ass,\gar)$. Fortunately, such assumptions and guarantees exist and are defined below.
    \begin{definition}\label{def:join_meet}
    	The \emph{join} of assumptions $\ass_1$ and $\ass_2$, denoted by $\ass_1\join \ass_2$, is defined as the assumptions
    	\begin{equation*}
    		\ass_1\join\ass_2: \bbm I & I \\ A_1\pddt & 0 \\ 0 & A_2\pddt \ebm \bbm l_1 \\ l_2 \ebm = \bbm I \\ 0 \\ 0 \ebm u.
    	\end{equation*}
    	The \emph{meet} of guarantees $\gar_1$ and $\gar_2$, denoted by $\gar_1 \meet \gar_2$, is defined as the guarantees
    	\begin{equation*}
    		\gar_1\meet\gar_2: \bbm G_1 \pddt \\[1mm] G_2\pddt \ebm y = 0.
    		\vspace{2mm}
    	\end{equation*}
    \end{definition}

    We remark that although the join $\ass_1\join \ass_2$ is represented with latent variables, these can be eliminated to obtain assumptions as in \eqref{eq:ass}. However, the latent variable representation clearly indicates that every $u\in\Bi{\ass_1\join\ass_2}$ is obtained by summing $l_1\in\Bi{\ass_1}$ and $l_2\in\Bi{\ass_2}$. Having defined the join and meet, we can now write the conjunction of contracts $\con_1$ and $\con_2$ explicitly.
    \begin{theorem}\label{thm:conjunction}
    	The conjunction of contracts $\contract{1}$ and $\contract{2}$ is given by
    	\begin{equation*}
    		\con_1\meet\con_2 = (\ass_1\join\ass_2,\gar_1\meet\gar_2).
    		\vspace{2mm}
    	\end{equation*}
    \end{theorem}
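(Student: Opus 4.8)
The plan is to reduce everything to Lemma~\ref{lem:conjunction} together with the observation stated just before the theorem: namely, that \emph{any} contract whose assumptions realize the input behaviour $\Bi{\ass_1}+\Bi{\ass_2}$ and whose guarantees realize the output behaviour $\Bo{\gar_1}\cap\Bo{\gar_2}$ is automatically the largest contract refining both $\con_1$ and $\con_2$, hence equals $\con_1\meet\con_2$. It therefore suffices to check that the join $\ass_1\join\ass_2$ and the meet $\gar_1\meet\gar_2$ from Definition~\ref{def:join_meet} realize exactly these two behaviours.

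First I would compute the input behaviour of the join. Reading off its latent-variable representation, a trajectory $u$ lies in $\Bi{\ass_1\join\ass_2}$ precisely when there exist $l_1,l_2$ with $u=l_1+l_2$, $A_1\pddt l_1=0$ and $A_2\pddt l_2=0$, i.e.\ with $l_1\in\Bi{\ass_1}$ and $l_2\in\Bi{\ass_2}$. This says exactly that $u\in\Bi{\ass_1}+\Bi{\ass_2}$, so $\Bi{\ass_1\join\ass_2}=\Bi{\ass_1}+\Bi{\ass_2}$. The meet is even more direct: the stacked equation holds iff $G_1\pddt y=0$ and $G_2\pddt y=0$ simultaneously, so $\Bo{\gar_1\meet\gar_2}=\Bo{\gar_1}\cap\Bo{\gar_2}$.

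With both identities established, I would conclude via the preceding observation. For completeness the ``largest'' claim can be spelled out in two directions. On the one hand, since $\Bi{\ass_1}+\Bi{\ass_2}\subset\Bi{\ass_1\join\ass_2}$ and $\Bo{\gar_1\meet\gar_2}\subset\Bo{\gar_1}\cap\Bo{\gar_2}$ (both with equality), Lemma~\ref{lem:conjunction} shows $(\ass_1\join\ass_2,\gar_1\meet\gar_2)$ refines both $\con_1$ and $\con_2$. On the other hand, if $\con=(\ass,\gar)$ is any contract refining both, then Lemma~\ref{lem:conjunction} gives $\Bi{\ass_1}+\Bi{\ass_2}\subset\Bi{\ass}$ and $\Bo{\gar}\subset\Bo{\gar_1}\cap\Bo{\gar_2}$, which by the two identities become $\Bi{\ass_1\join\ass_2}\subset\Bi{\ass}$ and $\Bo{\gar}\subset\Bo{\gar_1\meet\gar_2}$; Theorem~\ref{thm:refinement} then yields $\con\simby(\ass_1\join\ass_2,\gar_1\meet\gar_2)$. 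Hence this pair is the largest contract refining both, which is the claim.

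I do not expect a genuine obstacle, as the conceptual work is already carried by Lemma~\ref{lem:conjunction} and Theorem~\ref{thm:refinement}. The only point needing a little care is that the join $\ass_1\join\ass_2$ is written with latent variables $l_1,l_2$, so strictly one should eliminate them to bring it into the form~\eqref{eq:ass}; by Remark~\ref{rem:latent} this elimination preserves the (input) behaviour, leaving the identity $\Bi{\ass_1\join\ass_2}=\Bi{\ass_1}+\Bi{\ass_2}$ intact.
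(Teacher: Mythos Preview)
Your proposal is correct and follows essentially the same route as the paper: verify the two identities $\Bi{\ass_1\join\ass_2}=\Bi{\ass_1}+\Bi{\ass_2}$ and $\Bo{\gar_1\meet\gar_2}=\Bo{\gar_1}\cap\Bo{\gar_2}$, then invoke Lemma~\ref{lem:conjunction} and Theorem~\ref{thm:refinement}. The paper's own proof does exactly this, only more tersely; your version spells out both directions of ``largest'' and the latent-variable remark, which is fine but not needed.
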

	\begin{proof}
		It is easy to see that $\Bi{\ass_1\join\ass_2} = \Bi{\ass_1}+\Bi{\ass_2}$ and $\Bo{\gar_1\meet\gar_2} = \Bo{\gar_1}\cap\Bo{\gar_2}$. Therefore, due to Lemma~\ref{lem:conjunction} and Theorem~\ref{thm:refinement}, $(\ass_1\join\ass_2,\gar_1\meet\gar_2)$ is the largest contract that refines both $\con_1$ and $\con_2$.
	\end{proof}

    Theorem~\ref{thm:conjunction} allows us to check when $\sys$ of the form \eqref{eq:sys_io} in input-output form is simultaneously an implementation of $\con_1$ and $\con_2$. Indeed, this is the case if and only if $\sys$ implements the conjunction $\con_1\meet\con_2$, whose assumptions and guarantees we can compute explicitly. Given the assumptions and guarantees of $\con_1\meet\con_2$, we have already explained how to verify that $\sys$ implements $\con_1\meet\con_2$ in Remark~\ref{rem:implementation}.

    \section{Illustrative example}\label{sec:example}

    In this section, we will illustrate the concepts and results from last section with a simple example. To this end, suppose that we need to design a car component that does not vibrate while the car is moving, e.g., the driver's seat. For simplicity, we only consider the vertical motion of the car, which we describe by a quarter car model, as shown in Figure~\ref{fig:quarter_car}. The model consists of two masses: the mass $m_1$ of the body and the mass $m_2$ of the wheel. The wheel is attached to the body of the car through a spring with constant $k_1$ and a damper with coefficient $b$. The wheel is connected to the ground by a tire represented as a spring with constant $k_2$. Let $u_1$ and $u_2$ be the vertical positions of the body and the wheel, respectively, and let $l$ be the reference signal from the ground. Then the dynamics of $u_1$ and $u_2$ are given by
    \begin{align}
        \vspace{-1mm}
        m_1 \ddot u_1 &= -b(\dot u_1 - \dot u_2) - k_1(u_1 - u_2),  \label{eq:m1}\\
        m_2 \ddot u_2 &= -b(\dot u_2 - \dot u_1) - k_1(u_2 - u_1) - k_2(u_2 - l). \label{eq:m2}
        \vspace{-1mm}
    \end{align}

    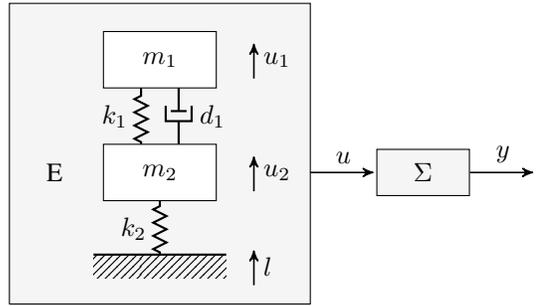
\begin{figure}
        \centering
        \begin{tikzpicture}
            \tikzstyle{spring}=[thick,decorate,decoration={zigzag,pre length=0.3mm,post length=0.3mm,segment length=5}]
            \tikzstyle{damper}=[thick,decoration={markings,
                mark connection node=dmp,
                mark=at position 0.5 with
                {
                    \node (dmp) [thick,inner sep=0pt,transform shape,rotate=-90,minimum width=9pt,minimum height=3pt,draw=none] {};
                    \draw [thick] ($(dmp.north east)+(2pt,0)$) -- (dmp.south east) -- (dmp.south west) -- ($(dmp.north west)+(2pt,0)$);
                    \draw [thick] ($(dmp.north)+(0,-3pt)$) -- ($(dmp.north)+(0,3pt)$);
                }
            }, decorate]

            \tikzstyle{ground}=[fill,pattern=north east lines,draw=none, minimum width=0.75cm, minimum height=0.3cm]

            \tikzset{box/.style = {shape = rectangle,
                    color=black,
                    fill=white!96!black,
                    text = black,
                    inner sep = 5pt,
                    minimum width = 35pt,
                    minimum height = 17.5pt,
                    draw}
            }

            \draw[fill=white!96!black] (-2, -.25) -- (2, -.25) -- (2,3.75) -- (-2, 3.75) -- (-2, -.25);

            \node[ground, minimum width=1.75cm] (g) at (0,.25) {};
            \draw[thick] (g.north east) -- (g.north west);

            \node[minimum height=.75cm, minimum width=1.5cm, draw, fill=white] (m2) at (0,1.5) {$m_2$};

            \node[minimum height=.75cm, minimum width=1.5cm, draw, fill=white] (m1) at (0,3) {$m_1$};

            \draw [spring] (g) -- (m2);
            \draw [spring] (-.25,1.875) -- (-.25,2.625);
            \draw [damper] (.25,1.875) -- (.25,2.625);
            \node at (.70, 2.25) {$d_1$};
            \node at (-.60, 2.25) {$k_1$};
            \node at (-.35, 0.75) {$k_2$};

            \draw[->,>=stealth',shorten >=1pt, semithick] (1.25, 2.75) --node[midway, right] {$u_1$} (1.25,3.25);
            \draw[->,>=stealth',shorten >=1pt, semithick] (1.25, 1.25) --node[midway, right] {$u_2$} (1.25,1.75);
            \draw[->,>=stealth',shorten >=1pt, semithick] (1.25, 0) --node[midway, right] {$l$} (1.25,.5);

            \node at (-1.4, 1.5) {$\env$};
            \node[box] (S) at (3.5, 1.5) {$\sys$};
            \draw[->,>=stealth',shorten >=1pt, semithick] (2, 1.5) --node[midway, above] {$u$} (S);
            \draw[->,>=stealth',shorten >=1pt, semithick] (S) --node[midway, above] {$y$} (5, 1.5);
        \end{tikzpicture}
        \caption{Quarter car model of the environment.}
        \label{fig:quarter_car}
        \vspace{-5mm}
    \end{figure}

    Now, suppose that the vertical position of our component is controlled electronically on the basis of the positions of the body and the wheel. In other words, the input to our system is given by $u = [u_1 \quad u_2]\t$. The component, with vertical position $y$, will be devoid of vibrations if it has zero acceleration, hence the desired guarantees $\gar$ are given by \eqref{eq:gar} with $G(s) = s^2$. If there is no information about the reference signal $l$, i.e., there is no information about the surface on which the car moves, then the available information about $u$ is given by~\eqref{eq:m1}. Therefore, the assumptions $\ass$ are given by $\eqref{eq:ass}$ with $A(s) = \bbm m_1s^2 + bs + k_1 & -bs - k_1 \ebm$ and our specification is captured by the contract $\contract{}$.

    An implementation of $\con$ is given by the system
    \begin{equation*}
        \sys: \left\lbrace
        \begin{aligned}
            \dot x &= \bbm 0 & 1 \\ 0 & 0 \ebm x + \tfrac{1}{m_1}\bbm b & -b \\ k_1 & -k_1\ebm u,\\[2mm]
            y &= \bbm 1 & 0 \ebm x + \bbm 1 & 0 \ebm u\bstrut.
        \end{aligned}\right.
    \end{equation*}
    Indeed, we can eliminate $x$ from $\sys$ to obtain
    \begin{equation*}
        \ddt{2} y  = \ddt{2}u_1 + \tfrac{b}{m_1}\ddt{}(u_1 - u_2) + \tfrac{k_1}{m_1}(u_1 - u_2)
    \end{equation*}
    where the right-hand side is zero for all $u\in\Bi{\ass}$. Then
    \begin{equation*}
        \Bo{\ass\meet\sys} = \set{y\in\cinf{p}}{\ddt{2} y = 0},
    \end{equation*}
    and thus $\Bo{\ass\meet\sys} = \Bo{\gar}$, which shows that $\sys$ is an implementation of $\con$ because of Theorem~\ref{thm:implementation}.

    Alternatively, if there is information about the reference signal $l$, then we can use that information to construct a contract which is easier to implement. For example, if we assume that $l = 0$, then \eqref{eq:m1} and \eqref{eq:m2} yield the assumptions
    \begin{equation*}
        \ass_0: A_0\pddt u = 0,
    \end{equation*}
    where the polynomial matrix $A_0(s)$ is given by
    \begin{equation*}
        A_0(s) = \bbm m_1s^2 + bs + k_1 & -bs - k_1 \\  -bs - k_1 & m_2s^2 + bs + k_1 + k_2\ebm.
    \end{equation*}
    Consequently, the contract $\con_0 = (\ass_0 ,\gar)$ is implemented by
    \begin{equation*}
        \sys_0: \left\lbrace
        \begin{aligned}
            \dot x &= \bbm 0 & 1 \\ 0 & 0 \ebm x + \tfrac{1}{m_2}\bbm -b & b \\ -k_1 & k_1 + k_2\ebm u,\\[2mm]
            y &= \bbm 1 & 0 \ebm x + \bbm 0 & 1 \ebm u\bstrut.
        \end{aligned}\right.
    \end{equation*}
    Indeed, eliminating $x$ from $\sys_0$ yields
    \begin{equation*}
        \ddt{2} y  = \ddt{2}u_2 + \tfrac{b}{m_2}\ddt{}(u_2 - u_1) + \tfrac{k_1}{m_2}(u_2 - u_1) + \tfrac{k_2}{m_2}u_2,
    \end{equation*}
    where the right-hand side is zero for all $u\in\Bi{\ass_0}$, hence $\Bo{\ass_0\meet\sys_0} = \Bo{\gar}$. Note that $\sys_0$ does not implement $\con$ because the right-hand side of the latter is not necessarily zero for all $u\in\Bi{\ass}$. However, we clearly have that $\Bi{\ass_0}\subset\Bi{\ass}$, hence $\con$ refines $\con_0$ due to Theorem~\ref{thm:refinement}. This implies that $\sys$ implements $\con_0$, and more generally, that any implementation of $\con$ is an implementation of $\con_0$. Of course, we already expected a component that works for arbitrary reference signal $l$ to work for $l=0$ in particular.

    Finally, we might want to take into account different possibilities for the reference signal $l$. For example, we might want to consider reference signals that satisfy $\dot l = 0$ (``flat'' road) and $\ddot l + l =0$ (``wavy'' road). Substituting these in equations \eqref{eq:m1} and \eqref{eq:m2} results in the assumptions
    \begin{equation*}
        \ass_1: A_1\pddt u = 0,\quad \ass_2: A_2\pddt u = 0,
    \end{equation*}
    with polynomial matrices $A_1(s)$ and $A_2(s)$ given by
    \begin{equation*}
        A_1(s) = \bbm 1 & 0 \\ 0 & s\ebm A_0(s),\quad A_2(s) = \bbm 1 & 0 \\ 0 & s^2 + 1 \ebm A_0(s).
    \end{equation*}
    Then a component that works for a ``flat'' road needs to satisfy the contract $\con_1 = (\ass_1, \gar)$, and a component that works for a ``wavy'' road needs to satisfy the contract $\con_2 = (\ass_2, \gar)$. Therefore, a component that works for both roads needs to satisfy the conjunction $\con_1\meet\con_2 = (\ass_1\join\ass_2, \gar)$, which we have obtained using Theorem~\ref{thm:conjunction}. Note that $\con$ refines both $\con_1$ and $\con_2$, while $\con_1$ and $\con_2$ both refine $\con_0$. Therefore, $\con$ refines the conjunction $\con_1\meet\con_2$, which in turn refines $\con_0$.

    \section{Conclusion}\label{sec:conclusion}
    We have introduced assume-guarantee contracts for linear dynamical systems with inputs and outputs. We defined these as a pair of linear systems, called assumptions and guarantees, which were used to characterize the class of compatible environments and the class of implementations through the notion of system behaviour, and in particular, by inclusion of behaviours. In addition to defining contracts, we also characterized contract implementation and proposed a method for verifying it. Moreover, we characterized contract refinement by mirrored inclusions of behaviours of assumptions and guarantees. Using this, we provided an explicit characterization of contract conjunction in terms of the join of assumptions and meet of guarantees. We also demonstrated our setup and results with an illustrative example. Finally, we still require a suitable notion of contract composition. This would enable the component-based analysis and design of interconnected systems, hence its definition and characterization will be the focus of future work.

	\appendix
    Before we give the proof of Theorem~\ref{thm:refinement}, we will state and prove an intermediate result involving autonomous systems. A system $\sys$ is called \emph{autonomous} if
	\begin{equation*}
		\Bo{\sys} = \set{y\in\cinf{p}}{P\pddt y = 0}
	\end{equation*}
	for some square and invertible polynomial matrix $P(s)$.
	\begin{lemma}\label{lem:refinement}
		If $\Bo{\sys}\subset\Bo{\gar_1}$ implies $\Bo{\sys}\subset\Bo{\gar_2}$ for all autonomous $\sys$, then $\Bo{\gar_1}\subset\Bo{\gar_2}$.
	\end{lemma}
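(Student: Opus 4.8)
The plan is to convert this functional-analytic statement into a purely algebraic divisibility question via the inclusion characterization of Theorem~\ref{thm:inclusion}, and then to probe the desired inclusion with a sufficiently rich family of autonomous systems. Write $\Bo{\gar_i} = \set{y\in\cinf{p}}{G_i\pddt y = 0}$. As noted in the discussion following Lemma~\ref{lem:inclusion}, we may assume without loss of generality that $G_1(s)$ has full row rank $q$, so that its Smith canonical form reads $G_1(s) = U(s)\bbm D(s) & 0 \ebm V(s)$ with $U(s),V(s)$ unimodular and $D(s)$ a $q\times q$ invertible diagonal polynomial matrix. If $q = p$, then $G_1(s)$ is itself square and invertible, $\gar_1$ is autonomous, and applying the hypothesis to $\sys = \gar_1$ gives $\Bo{\gar_1}\subset\Bo{\gar_2}$ at once; so the interesting case is $q < p$.

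First I would construct, for every square invertible polynomial matrix $N(s)$ of size $p-q$, the autonomous system $\sys_N$ given by
\begin{equation*}
    \bbm D\pddt & 0 \\ 0 & N\pddt \ebm V\pddt y = 0.
\end{equation*}
Its defining matrix $P(s) = \bbm D(s) & 0 \\ 0 & N(s)\ebm V(s)$ is square and invertible, so $\sys_N$ is autonomous. Moreover $G_1(s) = U(s)\bbm I & 0 \ebm P(s)$, so Theorem~\ref{thm:inclusion} yields $\Bo{\sys_N}\subset\Bo{\gar_1}$. The hypothesis then gives $\Bo{\sys_N}\subset\Bo{\gar_2}$, and a second application of Theorem~\ref{thm:inclusion} produces a polynomial matrix $M_N(s)$ with $G_2(s) = M_N(s)P(s)$, that is, $G_2(s)V(s)\inv = M_N(s)\bbm D(s) & 0 \\ 0 & N(s)\ebm$.

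Next I would exploit that this holds for \emph{every} choice of $N(s)$. Partitioning $G_2(s)V(s)\inv = \bbm H_1(s) & H_2(s) \ebm$ conformally (so $H_1$ has $q$ columns and $H_2$ has $p-q$ columns), the identity above forces $H_1(s)D(s)\inv$ and $H_2(s)N(s)\inv$ to be polynomial matrices. The first condition is independent of $N$; the decisive one is the second, which must hold for all square invertible $N(s)$. Taking $N(s) = (s-a)I$ with $a\in\bbC$ shows that $(s-a)$ divides every entry of $H_2(s)$ for all $a$, which is impossible for a nonzero polynomial matrix; hence $H_2(s) = 0$. This step — realizing that varying the otherwise irrelevant ``tail'' $N(s)$ pins down the free part of the behaviour — is the crux of the argument.

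Finally, with $H_2 = 0$ and $M_0(s) := H_1(s)D(s)\inv$ polynomial, I would reassemble
\begin{equation*}
    G_2(s) = \bbm H_1(s) & 0 \ebm V(s) = M_0(s)\bbm D(s) & 0 \ebm V(s) = M_0(s)U(s)\inv G_1(s).
\end{equation*}
Since $M_0(s)U(s)\inv$ is polynomial ($U$ being unimodular), Theorem~\ref{thm:inclusion} delivers $\Bo{\gar_1}\subset\Bo{\gar_2}$, as required. I expect the main obstacle to be the reduction itself: recognizing that autonomous systems, although finite-dimensional and hence individually unable to contain an arbitrary trajectory of $\Bo{\gar_1}$, are nonetheless plentiful enough — once organized through the Smith form and the free ``tail'' $N(s)$ — to detect the divisibility $G_2 = M G_1$ that governs the inclusion.
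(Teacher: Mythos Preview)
Your argument is correct and follows essentially the same route as the paper's proof: both pass through the Smith form of $G_1$, build a family of autonomous completions by inserting an invertible block in the ``free'' tail, and then vary that block to force the corresponding part of $G_2(s)V(s)^{-1}$ to vanish. The only cosmetic difference is that the paper takes $N(s)=s^kI$ and argues by degree (the product $M_{k2}(s)s^k$ has bounded degree independent of $k$, hence $M_{k2}=0$ for large $k$), whereas you take $N(s)=(s-a)I$ and argue by roots; both choices accomplish the same thing.
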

	\begin{proof}
		Let $\gar_1$ and $\gar_2$ be given by
		\begin{equation*}
		\gar_1: G_1\pddt y = 0 \qand \gar_2: G_2\pddt y = 0
		\end{equation*}
		for some polynomial matrices $G_1(s)$ and $G_2(s)$. Furthermore, let $U(s)$ and $V(s)$ be unimodular matrices that bring $G_1(s)$ to its Smith canonical form, that is,
		\begin{equation*}
			G_1(s) = U(s)\bbm G_{11}(s) & 0 \\ 0 & 0\ebm V(s)
		\end{equation*}
		where $G_{11}(s)$ is an invertible diagonal polynomial matrix. Then, for any positive integer $k$, the system
        \begin{equation*}
            \sys_k: \bbm G_{11}\pddt & 0\\ 0 & \frac{\d^k}{\d t^k} I \ebm V\pddt y = 0
        \end{equation*}
        is autonomous and, due to Theorem~\ref{thm:inclusion} and the fact that
        \begin{equation}\label{eq:G1}
            G_1(s) = U(s) \bbm I & 0 \\ 0 & 0 \ebm \bbm G_{11}(s) & 0\\ 0 & s^k I \ebm V(s),
        \end{equation}
        it is such that $\Bo{\sys_k}\subset\Bo{\gar_1}$. From this it follows that $\Bo{\sys_k}\subset\Bo{\gar_2}$, hence, for any positive integer $k$, there exists a polynomial matrix $M_k(s)$ such that
        \begin{equation}\label{eq:G2}
            G_2(s) = M_k(s)\bbm G_{11}(s) & 0\\ 0 & s^k I \ebm V(s).
        \end{equation}
        We can partition $M_k(s) = [M_{k1}(s)\quad M_{k2}(s)]$ to obtain
        \begin{equation*}
            G_2(s)V(s)\inv = \bbm M_{k1}(s)G_{11}(s) & M_{k2}(s)s^k \ebm.
        \end{equation*}
        {\color{black}Since $G_2(s)V(s)\inv$ is independent of $k$, it follows that $M_{k2}(s)s^k$ is also independent of $k$. In particular, the degree of $M_{k2}(s)s^k$, defined as the maximum of the degrees of its entries, is the same for all $k$. But then taking $k$ to be greater than this degree implies that $M_{k2}(s) = 0$. Indeed, if $M_{k2}(s) \neq 0$, then the degree of $M_{k2}(s)s^k$ is greater than or equal to $k$, hence it is greater than itself, which is a contradiction. This implies that $M_{k2}(s)s^k = 0$ for some $k$, which implies that $M_{k2}(s)s^k = 0$ for all $k$ since $M_{k2}(s)s^k$ is independent of $k$. Let $k$ be fixed. Then $M_{k2}(s) = 0$ because $M_{k2}(s)s^k = 0$ and we can write }
        \begin{equation*}
            M_k(s) = M_k(s)\bbm I & 0 \\ 0 & 0 \ebm = M_k(s)U(s)\inv U(s) \bbm I & 0 \\ 0 & 0 \ebm.
        \end{equation*}
        Finally, it follows that \eqref{eq:G2} can be rewritten as
        \begin{equation*}
            G_2(s) = M_k(s)U(s)\inv U(s) \bbm I & 0 \\ 0 & 0 \ebm\bbm G_{11}(s) & 0\\ 0 & s^k I \ebm V(s),
        \end{equation*}
        which yields $G_2(s) = M_k(s)U(s)\inv G_1(s)$ because of \eqref{eq:G1}, hence  $\Bo{\gar_1}\subset\Bo{\gar_2}$ due to Theorem~\ref{thm:implementation}.
	\end{proof}

    Now we can turn to the proof of Theorem~\ref{thm:refinement}.

    \begin{proof}
        Suppose that $\Bi{\ass_2}\subset\Bi{\ass_1}$ and $\Bo{\gar_1}\subset\Bo{\gar_2}$. Let $\env$ be an environment compatible with $\con_2$. Then $\Bi{\env}\subset\Bi{\ass_2}\subset\Bi{\ass_1}$, hence $\env$ is compatible with $\con_2$. On the other hand, let $\sys$ be an implementation of $\con_1$. Note that $\ass_2$ is an environment compatible with $\con_1$, hence $\Bo{\ass_2\meet\sys}\subset\Bo{\gar_1}\subset\Bo{\gar_2}$ and thus $\sys$ is an implementation of $\con_2$.

        Conversely, suppose that $\con_1$ refines $\con_2$. Since $\ass_2$ is an environment compatible with $\con_2$, it follows that $\ass_2$ is compatible with $\con_1$ and $\Bi{\ass_2}\subset\Bi{\ass_1}$. Note that if $\sys$ is autonomous, then
        \begin{equation*}
            \Bo{\ass_1\meet\sys} = \Bo{\ass_2\meet\sys} = \Bo{\sys}.
        \end{equation*}
        In view of Theorem~\ref{thm:implementation}, this means that an autonomous $\sys$ is an implementation of $\con_i$, $i\in\{1,2\}$, if and only if $\Bo{\sys}\subset\Bo{\gar_i}$. Since all implementations of $\con_1$ are implementations of $\con_2$, it follows that $\Bo{\sys}\subset\Bo{\gar_1}$ implies $\Bo{\sys}\subset\Bo{\gar_2}$ for all autonomous $\sys$ and thus $\Bo{\gar_1}\subset\Bo{\gar_2}$ because of Lemma~\ref{lem:refinement}.
    \end{proof}

	\bibliographystyle{ieeetr}
	\bibliography{../../../references/all}
\end{document}